\newcommand\xx{\mathbf{x}}
\newcommand\kbar{{\overline{k}}}
\newcommand\Dfive{\mathbf{D}_5}
\newcommand\Esix{\mathbf{E}_6}
\newcommand\Eseven{\mathbf{E}_7}
\newcommand\Eeight{\mathbf{E}_8}
\newcommand\Eff{\Lambda_{\rm eff}}
\newcommand\Nef{\Lambda_{\rm eff}^\vee}
\newcommand\tS{{\widetilde S}}
\newcommand{\bP}{{\mathbbm P}}
\DeclareMathOperator\SSS{S}
\DeclareMathOperator\Gal{Gal}
\DeclareMathOperator\Pic{Pic}
\DeclareMathOperator\vol{vol}
\DeclareMathOperator\rk{rk}
\newcommand{\bbQ}{{\mathbbm Q}}
\newcommand{\bbR}{{\mathbbm R}}
\newcommand{\bbZ}{{\mathbbm Z}}
\newcommand{\br}{ }
\newcommand{\brr}{, }
\newcounter{abc}
\newenvironment{abc}{\begin{list}{\rm \alph{abc}) }{\usecounter{abc} \leftmargin=0.0pt \labelsep=0.0pt \listparindent=0.0pt \labelwidth=0.0pt \parsep=\smallskipamount \itemsep=0.0pt \topsep=0.0pt \partopsep=\smallskipamount}}{\end{list}}
\newcounter{iii}
\newenvironment{iii}{\begin{list}{\rm \roman{iii}) }{\usecounter{iii} \leftmargin=0.0pt \labelsep=0.0pt \listparindent=0.0pt \labelwidth=0.0pt \parsep=\smallskipamount \itemsep=0.0pt \topsep=0.0pt \partopsep=\smallskipamount}}{\end{list}}
\newcounter{III}
\newenvironment{III}{\begin{list}{\rm \Roman{III}) }{\usecounter{III} \leftmargin=0.0pt \labelsep=0.0pt \listparindent=0.0pt \labelwidth=0.0pt \parsep=\smallskipamount \itemsep=0.0pt \topsep=0.0pt \partopsep=\smallskipamount}}{\end{list}}
\newtheorem{theorem}{Theorem}[section]
\theoremstyle{definition}
\newtheorem{definition}[theorem]{Definition}
\newtheorem{example}[theorem]{Example}
\newtheorem{algo}[theorem]{Algorithm}
\theoremstyle{remark}
\newtheorem{ttt}[theorem]{}
\newtheorem{remark}[theorem]{Remark}
\newtheorem{remarks}[theorem]{Remarks}
\numberwithin{equation}{section}
\begin{document}

\title{On the factor alpha in Peyre's constant}

\author{Ulrich Derenthal}

\address{Mathematisches Institut, \!Ludwig-Maximilians-Universit\"at M\"unchen, 
  \!Theresienstr.\,39, D-80333 M\"un\-chen, Germany}

\email{ulrich.derenthal@mathematik.uni-muenchen.de}

\thanks{The first author was partly supported by Deutsche
  Forschungsgemeinschaft (DFG) grant DE~1646/2-1, SNF grant 200021\_124737/1,
  and by the Center for Advanced Studies of LMU M\"unchen.}

\author{Andreas-Stephan Elsenhans}

\address{School of Mathematics and Statistics F07,
  University of Sydney,
  NSW 2006,
  Sydney,
  Australia}

\email{stephan@maths.usyd.edu.au}

\thanks{The second author was supported in part by the Deutsche Forschungsgemeinschaft (DFG) through a funded research~project.}

\author{J\"org Jahnel}

\address{D\'epartement Mathematik, Universit\"at Siegen, Walter-Flex-Str.~3,
  D-57068 Siegen, Germany}

\email{jahnel@mathematik.uni-siegen.de}

\subjclass[2010]{Primary 14J26; Secondary 51M20, 14G05}

\date{}

\dedicatory{}

\keywords{Peyre's constant, del Pezzo surface, polyhedron, volume, {\tt polymake}}

\begin{abstract}
  For an arbitrary del Pezzo surface $S$, we compute $\alpha(S)$, which is the
  volume of a certain polytope in the dual of the effective cone of~$S$, using
  \texttt{magma} and \texttt{polymake}. The constant $\alpha(S)$ appears in
  Peyre's conjecture for the leading term in the asymptotic formula for
  the number of rational points of bounded height on $S$ over number fields.
\end{abstract}

\maketitle

\section{Introduction}

Let $S$ be a smooth del Pezzo surface defined over a number field $k$;
for simplicity, we assume in this introduction that its degree is $\le
\!7$. If $S$ contains infinitely many $k$-rational points, Manin's
conjecture \cite{FMT} makes a precise prediction of the number of
\mbox{$k$-rational} points of bounded height on $S$: Let $U$ be the
complement of the lines (more precisely, $(-1)$-curves; see
Section~\ref{sec:del_pezzo} for details) on $S$, and let $H$ be an
anticanonical height function on the set $S(k)$ of $k$-rational
points. Then the number
\begin{equation*}
  N_{U,H}(B) = \#\{\xx \in U(k) \mid H(\xx) \le B\}
\end{equation*}
of $k$-rational points of height at most $B$ outside the lines is
expected to grow asymptotically as $c_{S,H} B(\log B)^{\varrho(S)-1}$ for
$B \to \infty$, where $\varrho(S)$ is the rank of the Picard group
$\Pic(S)$ over $k$, and $c_{S,H} > 0$ is a constant whose value was
predicted by Peyre \cite{Pe} to be
\begin{equation*}
  c_{S,H} = \alpha(S)\beta(S)\omega(S,H).
\end{equation*}
See \cite{Bro} for an overview of this conjecture for del Pezzo
surfaces, and, e.g.,~\cite{EJ3} for experimental results.  Here,
$\omega(S,H)$ is essentially a product of local densities that shows
up similarly for example in applications of the Hardy-Littlewood
circle method, $\beta(S)$ is defined as
$\#H^1(\Gal(\kbar/k),\Pic(S_\kbar))$, and $\alpha(S)$ is the volume of
a~certain polytope in $\smash{\Pic(S)_\bbR = \Pic(S) \otimes_\bbZ \bbR \cong
\bbR^{\varrho(S)}}$.

In this note, we are interested in $\alpha(S)$. For any smooth del Pezzo
surface $S$, it can be defined as follows; see also
\cite[D\'efinition~2.4]{Pe} and \cite[Definition~2.4.6]{BT}.

\begin{definition}
  Let $\Eff(S)$ be the effective cone of $S$ in $\Pic(S)_\bbR$, let $\Nef(S)$
  be its dual cone with respect to the intersection form, and let $(-K_S) \in
  \Pic(S)$ be the anticanonical class of $S$. Then
  \begin{equation*}
    \alpha(S) = \varrho(S) \cdot \vol\{x \in \Nef(S) \mid \langle x, -K_S \rangle \leq 1\},
  \end{equation*}
  where the volume on $\Pic(S)_\bbR$ is normalized such that the lattice
  $\Pic(S)$ has covolume~$1$.
\end{definition}

\noindent
Over an algebraic closure $\kbar$ of $k$, the effective cone $\Eff(S_\kbar)$
is generated by the lines on $S_\kbar$. We will see that $\alpha(S)$ depends
only on the degree $d$ of $S$ and on the combinatorial structure of the action
of $\Gal(\kbar/k)$ on the lines on $S_\kbar$ via a subgroup $W(S)$ of their
finite symmetry group $W$, which is a Weyl group.

For smooth \emph{split\/} del Pezzo surfaces ($\#W(S)=1$, i.e., when
each line is defined over $k$), a formula for
$\alpha(S)$ was found in \cite[Theorem~4]{De}. For smooth
\emph{non-split} del Pezzo surfaces of degree $\ge \!5$, the values of
$\alpha(S)$ were determined in \cite[Section~7B]{DJT}.

\begin{remark}\label{rem:singular}
For a del Pezzo surfaces $S'$ with $\textbf{ADE}$ singularities, Manin's
conjecture can be formulated similarly. In this case, the expected constants
must be computed on the minimal desingularization $\tS'$ of $S'$. We have
$\alpha(\tS') = \frac{\alpha(S)}{\#W'}$, where $S$ is a smooth del Pezzo
surface of the same degree as $S'$ with a corresponding action of
$\Gal(\kbar/k)$ on its lines over $\kbar$ and $W'$ is a Weyl group associated
to the singularities of $S'$. See \cite[Theorem~1.3, Corollary~7.5]{DJT}
for details.
\end{remark}

\noindent
It remains to determine $\alpha(S)$ for smooth non-split del Pezzo
surfaces of degree $d \le 4$. By Remark~\ref{rem:singular}, this is also
relevent for the singular case. Unfortunately, it seems that this cannot be
done without certain case-by-case considerations. We~will give a general
algorithm and then apply it to the cases $d =1, \dots, 4$.

By \cite[Section~7A]{DJT}, we are reduced to finitely many cases
corresponding to the conjugacy classes of subgroups of the Weyl
groups~$W$.  For~example, when $d=1$, the number of cases is $62092$;
for cubic surfaces, there are $350$~cases.

Theorem~\ref{inkl} is our key result that allows us to reduce the
number of cases to an order of magnitude that could in principle be
treated by hand without too much effort: If $S$ and $S'$ have the same
degree, Picard groups of the same rank and if $W(S)$ is contained in a
conjugate of $W(S')$, then $\alpha(S)=\alpha(S')$.  Using this, only
$14$ cases are left for $d=4$, only $17$ for $d=3$, only $32$ for
$d=2$, and $41$ for~$d=1$.

Nevertheless, we choose to treat these cases not by hand, but with the
help of the software {\tt polymake}~\cite{GJ} that allows to compute
volumes of polytopes. The~subgroups of $W(R_d)$ can be obtained using
{\tt gap}~\cite{gap} or {\tt magma}~\cite{BCFS}. Our results are
summarized in Tables~\ref{tab:degfour}, \ref{Tab}
and~\ref{tab:degtwoone}.

At~least for $d \ge 3$, it is possible to extract the value of
$\alpha(S)$ for a concretely given del Pezzo surface from
these~tables. See Remark~\ref{rem:orbits} for a detailed~discussion.

\section{Del Pezzo surfaces}\label{sec:del_pezzo}

We recall some facts on the structure of del Pezzo surfaces. See \cite{Ma} for
more~details. A (smooth / ordinary) del Pezzo surface $S$ over a field $k$ is
a smooth projective variety of dimension $2$ defined over $k$ whose
anticanonical class $(-K_S)$ is~ample. Its degree $d$ is the self-intersection
number $(-K_S,-K_S)$ of the anticanonical~class. Over $\kbar$, it is
isomorphic to $\bP^2$ (of degree $9$), $\bP^1 \times \bP^1$ (of degree $8$) or
the blow-up of $\bP^2$ in $r$ points in general position (with $r \in \{1,
\dots, 8\}$, of degree $d=9-r$). Its geometric Picard group $\Pic(S_\kbar)$ is
free of rank $10-d$.

A $(-1)$-curve is a curve $E$ on $S_\kbar$ such that its class $[E] \in
\Pic(S_\kbar)$ satisfies $(-K_S,[E])=1$ and $([E],[E])=-1$.  A~$(-2)$-class is
an element $L$ of $\Pic(S_\kbar)$ with $(-K_S,L)=0$ and $(L,L)=-2$.

\begin{table}[ht]
  \centering
  \[\begin{array}[h]{|c||c|c|c|c|}
    \hline
    d & 4 & 3 & 2 & 1 \\
    \hline\hline
    R_d & \Dfive & \Esix & \Eseven & \Eeight \\
    \#W(R_d) & 1920 & 51840 & 2903040 & 696729600 \\
    \hline
    w_d & 1328 & 1161 & - & -\\
    c_d & 197 & 350 & 8074 & 62092\\
    c_d' & 38 & 91 & 1071 & 13975 \\
    c_d'' & 14 & 17 & 32 & 41 \\
    \hline
    N_d & 16 & 27 & 56 & 240 \\
    N_{d,0} & 10 & 16 & 27 & 56 \\
    N_{d,1} & 5 & 10 & 27 & 126 \\
    N_{d,2} & 0 & 0 & 1 & 56 \\
    N_{d,3} & 0 & 0 & 0 & 1 \\
    \hline
  \end{array}\]\vskip1mm
  \caption{The root systems associated to del Pezzo surfaces  of degree
   $\leq\! 4$}
  \label{tab:weyl_groups}\vskip-5mm
\end{table}

From here, we restrict to the case $d \le 7$. Then $\Pic(S_\kbar)$ and its
intersection form depend only on the degree $d$ of $S$
(cf.~\cite[Section~3]{DJT}). Consequently the same holds for the
$(-2)$-classes (which form a root system $R_d$ whose type can be found in
Table~\ref{tab:weyl_groups} for $d\le 4$) and the classes of the $(-1)$-curves
(whose number is denoted by $N_d$) with their pairwise intersection numbers.

The geometric Picard group $\Pic(S_\kbar)$ is generated by the classes of the
$(-1)$-curves. The symmetry group of $\Pic(S_\kbar)$ respecting the
intersection form is the Weyl group~$W(R_d)$ associated to the root system
$R_d$ of the $(-2)$-classes in $\Pic(S_\kbar)$. Via its action on the set of
$(-1)$-curves, $W(R_d)$ can be regarded as a subgroup of the symmetric group
$\SSS_{N_d}$. Over~$\kbar$, the effective cone $\Eff(S_\kbar) \subset
\Pic(S_\kbar)_\bbR$ is generated by the $(-1)$-curves (see
\cite[Proposition~3.9]{DJT}, for example). All this depends only on the degree
$d \le 7$.

\begin{remark}\label{rem:matrix}
  For $d \le 5$, the group $W(R_d)$ acts transitively on the sets of pairs of
  $(-1)$-curves with fixed intersection number (in the set $\{-1, \dots,
  3\}$). This~gives a way to recover $\Pic(S_\kbar)$ with the intersection
  form and the classes of the $(-1)$-curves if $W(R_d)$ is given as a subgroup
  of $\SSS_{N_d}$. Namely, any $(-1)$-curve $E$ has intersection number $i$
  with precisely $N_{d,i}$ other $(-1)$-curves, as listed in
  Table~\ref{tab:weyl_groups}. Therefore, the pairs of $(-1)$-curves with
  intersection number $i$ form an orbit with precisely $\smash{\frac{N_d \cdot
    N_{d,i}}2}$ elements under the action of $W(R_d)$. This gives a $N_d \times
  N_d$ intersection matrix $M_d$ of rank $10-d$, with $\Pic(S_\kbar) \cong
  \bbZ^{N_d} / \ker M_d$.

  For $d \in \{1,2\}$, we have $N_{d,0}=N_{d,3-d}$, giving two orbits of the
  same size. This results in two candidates for $M_d$, where the correct one
  can be identified by the expected~rank.
\end{remark}

\noindent
However, $\Pic(S)$ and $\alpha(S)$ depend fundamentally on the
structure of $S$ over $k$, via the natural action of the Galois group
$\Gal(\kbar/k)$ on $\Pic(S_\kbar)$. Since this action respects the
intersection pairing, it factorizes via a subgroup $W(S)$ of
$W(R_d)$. The~group $W(S)$ acts on the set of $(-1)$-curves, breaking it
into $n$ orbits\break \mbox{$\{E_{i,1}, \dots, E_{i,k_i}\}$}~of size $k_i$, for
$i=1, \dots, n$.

Since $\alpha(S)$ is irrelevant if $S(k) = \emptyset$, we assume once and for
all that $S$ has a \mbox{$k$-rational}~point.  Then $\smash{\Pic(S) =
  \Pic(S_{\overline{k}})^{\Gal(\overline{k}/k)}}$ by \cite[Theorem~2.1.2,
Claim~(iii)]{CTS}. Consequently, $\Eff(S)$ is generated by the classes of
$E_{i,1}+\dots+E_{i,k_i}$, for $i=1, \dots, n$.

Therefore, $\alpha(S)$ depends only on the conjugacy class of $W(S)$ in
$W(R_d)$, which reduced the computation of all cases of $\alpha(S)$ to a
finite problem. We denote the number of conjugacy classes of subgroups of
$W(R_d)$ by $c_d$, see Table~\ref{tab:weyl_groups}. More precisely, it depends
only on the orbit structure of the action of $W(S)$ on the set of
$(-1)$-curves, up to conjugacy, reducing the problem to $c_d'$ classes in
degree $d$.

For $d \le 6$, the sum of the classes of the $(-1)$-curves in $\Pic(S_\kbar)$
is $\frac{N_d}{d}\cdot (-K_S)$. Indeed, this sum is invariant under $W(R_d)$,
so it is a scalar multiple of $(-K_S)$. The value of this scalar is determined
from the fact that $(-K_S,-K_S)=d$ and $([E],-K_S)=1$ for each $(-1)$-curve
$E$.

\begin{theorem}
\label{inkl}
Let\/~$S_1, S_2$ be del Pezzo surfaces of degree\/ $d \le \!7$ over a
field\/~$k$ such that\/ $S_1(k), S_2(k) \neq \emptyset$.  For the
corresponding subgroups\/ $W(S_1), W(S_2)$ of\/ $W(R_d)$, suppose that\/
$W(S_1) \subseteq gW(S_2)g^{-1}$ for some\/ $g \in W(R_d)$.  Further,~assume\/
$\rk \Pic(S_1) = \rk \Pic(S_2)$.  Then\/~$\alpha(S_1) = \alpha(S_2)$.
\end{theorem}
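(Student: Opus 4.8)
The plan is to reformulate both sides purely in terms of the geometric Picard lattice, where everything becomes combinatorial. Set $\Lambda := \Pic(S_\kbar)$ and $V := \Lambda\otimes_\bbZ\bbR$; by Section~\ref{sec:del_pezzo} the lattice $\Lambda$, the intersection form $\langle\cdot,\cdot\rangle$ on it, the set of classes of $(-1)$-curves, and the group $W := W(R_d)$ acting on all of this depend only on $d$. Write $W_i := W(S_i)\subseteq W$ for $i=1,2$. Since $S_i(k)\neq\emptyset$ we have $\Pic(S_i)=\Lambda^{W_i}$, hence $\Pic(S_i)_\bbR = V^{W_i}$ and $\varrho(S_i)=\dim V^{W_i}$; and $(-K_S)\in\Lambda$ is $W$-invariant, so $(-K_S)\in V^{W_i}$.

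The key step is to rewrite Peyre's polytope for $S_i$ inside the ambient space $V$. Let $\pi_i := \frac{1}{\#W_i}\sum_{w\in W_i}w$ be the averaging projector onto $V^{W_i}$. Since every $w\in W_i$ is an isometry for $\langle\cdot,\cdot\rangle$, one has
\[
\langle x, \pi_i(y)\rangle = \langle x, y\rangle \qquad\text{for all } x\in V^{W_i},\ y\in V .
\]
If $E$ is a $(-1)$-curve lying in a $W_i$-orbit of size $k$, then $\pi_i(E)$ equals $\frac1k$ times the sum of the $(-1)$-curves in that orbit, hence a positive multiple of the corresponding generator of $\Eff(S_i)$; consequently $\Eff(S_i)$ is the cone generated by $\{\pi_i(E)\}$ as $E$ ranges over the $(-1)$-curves. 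Dualizing inside $V^{W_i}$ and using the displayed identity, $x\in\Nef(S_i)$ if and only if $\langle x,\pi_i(E)\rangle\geq 0$, equivalently $\langle x,E\rangle\geq 0$, for every $(-1)$-curve $E$; that is,
\[
\Nef(S_i) = V^{W_i}\cap\Nef(S_\kbar),
\]
where $\Nef(S_\kbar)\subset V$ is the (visibly $W$-invariant) dual of the cone generated by all $(-1)$-curves. Hence
\[
\alpha(S_i) = (\dim V^{W_i})\cdot\vol\bigl(\{x\in V^{W_i}\cap\Nef(S_\kbar)\ :\ \langle x,-K_S\rangle\leq 1\}\bigr),
\]
with the volume normalized so that $\Lambda^{W_i}$ has covolume $1$ in $V^{W_i}$.

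It remains to check that the right-hand side is unchanged when $W_i$ is replaced by a conjugate, and when $W_2$ is replaced by a subgroup $W_1\subseteq W_2$. For the first, pick $g\in W$ as in the hypothesis; $g$ is an isometry of $(\Lambda,\langle\cdot,\cdot\rangle)$ fixing $(-K_S)$ and preserving $\Nef(S_\kbar)$, and it carries $V^{W_2}$ onto $V^{gW_2g^{-1}}$ and $\Lambda^{W_2}$ onto $\Lambda^{gW_2g^{-1}}$; thus $g$ restricts to a lattice isomorphism taking the polytope for $W_2$ onto the one for $gW_2g^{-1}$, so both the normalized volume and the factor $\dim V^{W_i}$ agree for $W_2$ and $gW_2g^{-1}$. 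We may therefore assume $W_1\subseteq W_2$. Then $V^{W_2}\subseteq V^{W_1}$, and the hypothesis $\rk\Pic(S_1)=\rk\Pic(S_2)$ gives $\dim V^{W_1}=\dim V^{W_2}$, forcing $V^{W_1}=V^{W_2}$ and hence $\Lambda^{W_1}=\Lambda\cap V^{W_1}=\Lambda\cap V^{W_2}=\Lambda^{W_2}$. Now in the displayed formula the ambient space, the lattice (hence the volume normalization), the polytope, and the factor $\dim V^{W_i}$ are literally the same for $i=1$ and $i=2$, so $\alpha(S_1)=\alpha(S_2)$.

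The one nonformal ingredient is the identity $\Nef(S_i)=V^{W_i}\cap\Nef(S_\kbar)$, i.e.\ that forming the nef (dual) cone commutes with restriction to the Galois-fixed subspace; the averaging projector $\pi_i$ together with the fact that $W(S_i)$ acts by isometries is exactly what makes it work. Everything afterwards is bookkeeping with fixed subspaces and sublattices, and uses no genuine input beyond $\Pic(S_i)=\Pic(S_\kbar)^{\Gal(\kbar/k)}$ and the $W$-invariance of $-K_S$ and of $\Nef(S_\kbar)$.
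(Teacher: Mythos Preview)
Your proof is correct and shares the paper's core insight: once one reduces (by conjugation) to $W_1\subseteq W_2$, the rank hypothesis forces the invariant subspaces---hence the Picard groups and the relevant cones---to coincide, so the polytope and its normalized volume are literally the same for both surfaces.

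The difference is in how step~3 is organized. The paper works on the \emph{effective} side: it shows directly that each $W_1$-orbit sum equals $\frac1k$ times the containing $W_2$-orbit sum (because the $W_1$-orbit sum lies in $\Pic(S_1)=\Pic(S_2)$ and is therefore $W_2$-invariant), whence $\Eff(S_1)=\Eff(S_2)$. You work on the \emph{nef} side, first proving the clean structural identity
\[
\Nef(S_i)=V^{W_i}\cap\Nef(S_\kbar)
\]
via the averaging projector, and then reading off $\Nef(S_1)=\Nef(S_2)$ from $V^{W_1}=V^{W_2}$. Your formulation isolates a reusable lemma (the nef cone of the Galois-twisted surface is the slice of the geometric nef cone by the fixed subspace) which the paper does not state explicitly; the paper's argument, in turn, makes the equality of effective cones completely explicit at the level of generators. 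Either route is short and the underlying mechanism---isometric $W_i$-action plus averaging---is the same.
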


\begin{proof}
  As~$\Pic(S_i) = \Pic((S_i)_{\kbar})^{W(S_i)}$, we see that $\Pic(S_i)$ are
  maximal~sublattices. On~the other hand, up to isometry, $\Pic(S_1) \supseteq
  \Pic(S_2)$.  Thus,~the equality of the ranks implies that $\Pic(S_1) =
  \Pic(S_2)$.  Further,~every \mbox{$W(S_2)$-orbit} of \mbox{$(-1)$-curves}
  breaks into one or several \mbox{$W(S_1)$-orbits}.  If~$\{E_{1,1}, \ldots,
  E_{k,l_k}\}$ is a \mbox{$W(S_2)$-orbit} that~breaks into the
  \mbox{$W(S_1)$-orbits} $\{E_{i,1}, \ldots, E_{i,l_i}\}$, for $i =
  1,\ldots,k$,~then\break $[E_{i,1}] + \ldots + [E_{i,l_i}] \in \Pic(S_1) =
  \Pic(S_2)$ is $W(S_2)$-invariant and therefore independent of~$i$.
  In~particular,
  \begin{equation*}
    [E_{i,1}] + \ldots + [E_{i,l_i}] = \frac1k ([E_{1,1}] +
    \ldots + [E_{k,l}]) \in \Pic(S_1)_\bbR.
  \end{equation*}
  Hence,~$\Eff(S_1) = \Eff(S_2)$.
\end{proof}

\begin{ttt}\label{maximal}
  Let $S$ be of degree $d$.  The stabilizer $G$ of $\Pic(S)$ for the
  action of $W(R_d)$ on $\Pic(S_\kbar)$ is the unique subgroup of
  $W(R_d)$ containing $W(S)$ that is maximal with the property that
  $\varrho(S) = \rk(\Pic(S_\kbar)^G)$. We call each such $G$ a
  \emph{$\varrho$-maximal subgroup} of $W(R_d)$. Let $c_d''$ be the
  number of conjugacy classes of $\varrho$-maximal subgroups of
  $W(R_d)$ (cf.\ Table~\ref{tab:weyl_groups}).  Theorem~\ref{inkl}
  shows that it is enough to compute $\alpha(S)$ in the corresponding
  $c_d''$ cases.
\end{ttt}

\section{The algorithm}

\begin{algo}[$\alpha(S)$ for del Pezzo surfaces $S$ of degree $d \leq 5$]
\label{poly}
\begin{III}
\item Realize in {\tt gap} or {\tt magma} the group $W(R_d)$ as a
  subgroup of the symmetric group~$\SSS_{N_d}$, identifying the set of
  all lines with $\{1, \dots, N_d\}$.
\item Determine the $N_d \times N_d$ intersection matrix~$M_d$, using
  Remark~\ref{rem:matrix}.
\item The~geometric Picard~group of rank $10-d$ is now isomorphic to
  $\bbZ^{N_d} / \ker M_d$.  Select $(-1)$-curves $E_1, \ldots,
  E_{9-d}$ that are pairwise skew and a $(-1)$-curve~$E_{10-d}$ that
  is distinct from the previous ones. Ensure~that the corresponding
  $(10-d) \times (10-d)$ intersection matrix is of determinant~$\pm 1$.
  Then $\{[E_1], \ldots, [E_{10-d}]\}$ forms a basis of the geometric
  Picard~group. Express~every line in this~basis. This~yields a map $p
  \colon \{1, \dots, N_d\} \to \bbZ^{10-d}$, represented by a~matrix.
\item\label{vier} Calculate a set $U_d$ of representatives of the
  $c_d$ conjugacy classes of subgroups in~$W(R_d)$, together with the
  number of subgroups in each conjugacy class.
\item\label{it:ranks} For each subgroup $G \in U_d$, compute the $n_G$
  orbits of $G$ acting on $\{1, \dots, N_d\}$. Compute the rank
  $\varrho_G$ of $\Pic(S_\kbar)^G$, which is the rank of the $n_G
  \times n_G$ matrix whose entry $(i,j)$ is the intersection number
  (computed using $M_d$) of the sums of elements in the $i$-th and
  $j$-th orbit.
\item\label{it:reduce} Repeat the following until $U_d$ is empty, producing a set $M_d$
  (initially empty, at the end of order $c_d''$) of subgroups of
  $W(R_d)$ such that for any subgroup $H$ there is a unique $G \in
  M_d$ with $\varrho_G = \varrho_H$ containing a conjugate of $H$.
  \begin{iii}
  \item Choose a subgroup $G \in U_d$ of maximal order and add it to
    $M_d$.
  \item Remove all $H$ from $U_d$ with $\varrho_H = \varrho_G$
    contained in a conjugate of $G$.
  \end{iii}
\item\label{it:alpha} For each $G \in M_d$, compute the corresponding value of
  $\alpha(S)$ as follows:
  \begin{iii}
  \item For~each orbit $\{l_{i,1}, \ldots, l_{i,k_i}\}$ of $G$ acting on $\{1,
    \dots, N_d\}$ (with $i=1, \dots, n_G$), calculate the vector~$v_i =
    p(l_{i,1}) + \cdots + p(l_{i,k_i}) \in \bbZ^{10-d}$.  This~yields a list
    $v_1, \ldots, v_{n_G}$ of vectors in~$\bbZ^{10-d}$.
  \item Determine a basis of the free \mbox{$\bbZ$-module}
    $\bbZ^{10-d} \cap (\bbQ v_1 + \cdots + \bbQ v_{n_G}) \cong
    \bbZ^\varrho$, which is isomorphic to the
    Picard~group. Express~the vectors $v_1, \ldots, v_{n_G}$ in this basis
    and print the list of coefficient vectors $w_1, \ldots, w_{n_G} \in
    \bbZ^\varrho$ obtained, together with a marker of the conjugacy
    class treated, into a~file.
  \item Read~this file into a {\tt polymake}~script. For~each
    conjugacy class, realize in {\tt polymake} the polytope
    in~$\bbR^\varrho$, given by $\langle x, w_1 \rangle \geq 0, \ldots
    , \langle x, w_{n_G} \rangle \geq 0$ and\break $\langle x, w_1 + \cdots +
    w_{n_G} \rangle \leq \frac{N_d}d$.  Compute~the volume of the
    polytope, multiply by~$t$, and return the~result.
  \end{iii}
\end{III}
\end{algo}

\begin{remarks}
  \begin{abc}
  \item Steps~\ref{it:ranks} and~\ref{it:reduce} use Theorem~\ref{inkl} to
    reduce the number of computations of $\alpha(S)$ significantly from $c_d$
    to $c_d''$. This leads to a reasonably short list of results with a
    natural structure. For $d=1$, this also seems absolutely necessary to keep
    the running times reasonably low.

    To find all $H$ with $\varrho_H = \varrho_G$ contained in a conjugate of
    $G$ in $W=W(R_d)$ in \texttt{magma}, one may either compute
    \texttt{Conjugates(W,G)} and compare all candidates $H$ with the resulting
    list for inclusion, or test \texttt{IsConjugate(W,H,U)} for all $U$ in
    \texttt{Subgroups(G)}. For $d=1$, depending on the number of conjugates of
    subgroups of $G$ (which is up to $604800$) and candidates $H$ (which is up
    to $48797$), each option might take prohibitively long. It turns out that
    it is reasonable to take the first approach whenever the number of
    conjugates, known from the computation of \texttt{Subgroups(W)} in
    step~\ref{vier}, is less than the number of candidates. With this
    strategy, we must only deal with up to $1120$ conjugates in the first case
    and up to $1886$ candidates in the second case.
  \item Computationally, the case of degree $d=1$ is by far the
    hardest. Using \texttt{magma} V2.17-9 on an Intel Xeon L5640 CPU
    at 2.27 GHz, step~\ref{vier} took $32$ minutes,
    step~\ref{it:ranks} took $68$ minutes and step~\ref{it:reduce}
    took 3 minutes, resulting in $41$ conjugacy classes of
    $\varrho$-maximal subgroups. For $d=2$,
    steps~\ref{vier}--\ref{it:reduce} took a total of $35$ seconds
    giving $32$ conjugacy classes, and $d\ge 3$ is negligible.

    For $d \ge 3$, one can also use \texttt{gap} to compute the
    conjugacy classes of subgroups together with a numbering from $1$
    to~$c_d$ in~step~\ref{vier}. This~numbering is reproducible, at
    least in our version of~{\tt gap}. For $W(E_7)$ and~$W(E_8)$,
    however, \texttt{gap} runs out of memory. The~difference comes
    from the fact that the Cannon/Holt algorithm~\cite{CH} to
    determine the maximal subgroups of a given finite group is
    available in~{\tt magma}.
  \item The~running times for the volume computations were as~follows, using
    {\tt polymake}, version 2.9.9, on~an AMD Phenom II X4 955
    processor. We~describe the polytope by its {\tt INEQUALITIES}~properties.
    Again, degrees $d \ge 3$ are done in a few seconds.
   
    The~$32$ cases of del Pezzo surfaces of degree $2$ together took
    $85$~seconds of CPU~time.  Among~them, the most complicated case is
    that of the split del Pezzo~surface.  Here,~according to the
    definition, one has to compute the volume of a polytope
    in~$\bbR^8$, having $703$~vertices. This~case alone took 36~seconds;
    279\,MBytes of memory were being~allocated.

    The~39 cases of del Pezzo surfaces of degree $1$, excluding those
    of Picard ranks $8$ or~$9$, together took $140$~seconds of
    CPU~time. Here,~$300$\,MBytes of memory were~allocated.
    The~rank-$8$ case alone, however, took around $37$~minutes of CPU
    time and $3$\,GBytes of~memory.  Finally,~the split del Pezzo
    surface of degree one leads to a polytope in~$\bbR^9$ with
    $19\,441$~vertices. Here,~{\tt polymake} fails, as $8$\,GBytes of
    working memory turn out to be~insufficient. In~order to make it
    work, we incorporated the obvious $\SSS_8$-symmetry of
    the~polytope. Then~the volume could be computed within
    $3.8$~seconds, using only $150$\,MBytes of~memory. The same trick
    works for the rank-$8$ case. Here,~incorporating the obvious
    $\SSS_6$-symmetry reduces the running time to $4.8$~seconds and the
    memory usage to $158$\,MBytes.
  \item In~the cases when $S$ is isomorphic over~$k$ to the blow-up
    of~$\bP^2$ in some Galois-invariant set of size $9-d$, one has
    explicit generators for the Picard
    group~\cite[Theorem~V.4.9]{Ha}. Such~cases may, in principle, be
    handled~interactively. For~example, the following snippet of {\tt
    polymake}~code computes $\alpha(S)$ for a split cubic
    surface~$S$.

{\tiny
\begin{verbatim}
$p=new Polytope<Rational>(INEQUALITIES=>[[0,0,1,0,0,0,0,0], [0,0,0,1,0,0,0,0], [0,0,0,0,1,0,0,0],
[0,0,0,0,0,1,0,0], [0,0,0,0,0,0,1,0], [0,0,0,0,0,0,0,1], [0,1,-1,-1,0,0,0,0], [0,1,-1,0,-1,0,0,0],
[0,1,-1,0,0,-1,0,0], [0,1,-1,0,0,0,-1,0], [0,1,-1,0,0,0,0,-1], [0,1,0,-1,-1,0,0,0],
[0,1,0,-1,0,-1,0,0], [0,1,0,-1,0,0,-1,0], [0,1,0,-1,0,0,0,-1], [0,1,0,0,-1,-1,0,0],
[0,1,0,0,-1,0,-1,0], [0,1,0,0,-1,0,0,-1], [0,1,0,0,0,-1,-1,0], [0,1,0,0,0,-1,0,-1],
[0,1,0,0,0,0,-1,-1], [0,2,-1,-1,-1,-1,-1,0], [0,2,-1,-1,-1,-1,0,-1], [0,2,-1,-1,-1,0,-1,-1], 
[0,2,-1,-1,0,-1,-1,-1], [0,2,-1,0,-1,-1,-1,-1], [0,2,0,-1,-1,-1,-1,-1], [1,-3,1,1,1,1,1,1]]);
print (($p->DIM)*($p->VOLUME));
\end{verbatim}
}
  \end{abc}
\end{remarks}

\section{Results}

For~$d \geq 5$, the values of $\alpha(S)$ were systematically computed
in~\cite[Section~7B]{DJT}. For~$d = 5$, our algorithm recovers the results
listed in \cite[Table~8]{DJT}. For $d=4$, the description of the results is
relatively straightforward. For the probably most important and interesting
case $d=3$ of cubic surfaces, we give more details. For simplicity, we only
give an overview of the results for $d=2$ and $d=1$.

\begin{theorem}[The values of $\alpha(S)$ for quartic del Pezzo surfaces]
  Let\/~$S$ be a smooth quartic del Pezzo surface over a
  field\/~$k$ such that\/~$S(k) \neq \emptyset$.  Then exactly one of
  following is~true.\smallskip

\begin{III}
\item $\rk \Pic(S) = 1$.  Then $\alpha(S) = 1$.\smallskip
\item $\rk \Pic(S) = 2$.  Then there are two~cases.
\begin{iii}
\item $S$~has no\/ \mbox{$k$-rational} $(-1)$-curve.  Then $\alpha(S)
  = 1$.
\item $S$~is isomorphic to\/ $\bP^2$, blown up in an orbit of size~five.
  Then $\alpha(S) = \frac23$.
\end{iii}\smallskip
\item $\rk \Pic(S) = 3$.  Then there are two~cases.
\begin{iii}
\item $S$~has no\/ \mbox{$k$-rational} $(-1)$-curve.  Then $\alpha(S) =
  \frac12$.
\item $S$~is isomorphic to\/ $\bP^2$, blown up in a\/ \mbox{$k$-rational}
  point and an orbit of size four or in an orbit of size two and an orbit of
  size~three.  Then $\alpha(S) = \frac13$.
\end{iii}\smallskip
\item $\rk \Pic(S) = 4$.  Then there are two~cases.
\begin{iii}
\item $S$~has no\/ \mbox{$k$-rational} $(-1)$-curve.  Then $\alpha(S) =
  \frac16$.
\item $S$~is isomorphic to\/ $\bP^2$, blown up in two\/ \mbox{$k$-rational}
  points and an orbit of size three or in a\/ \mbox{$k$-rational} point and
  two orbits of size~two.  Then $\alpha(S) = \frac19$.
\end{iii}\smallskip
\item $\rk \Pic(S) = 5$.  Then $\alpha(S) = \frac1{36}$.\smallskip
\item $\rk \Pic(S) = 6$.  Then $\alpha(S) = \frac1{180}$.
\end{III}
\end{theorem}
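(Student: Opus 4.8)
The plan is to run Algorithm~\ref{poly} for $d=4$ — so $W(R_4)=W(\Dfive)$, there are $N_4=16$ lines, and $\rk\Pic(S)$ lies between $1$ and $6$ — and then to translate its output into geometry. Recall the reductions already available: $\alpha(S)$ depends only on the conjugacy class of $W(S)$ in $W(\Dfive)$ and, by Theorem~\ref{inkl} and~\ref{maximal}, only on the conjugacy class of the $\varrho$-maximal subgroup $G\supseteq W(S)$; so it suffices to treat the $c_4''=14$ conjugacy classes of $\varrho$-maximal subgroups of $W(\Dfive)$. Steps~(I)--(VI) of the algorithm realise $W(\Dfive)\subseteq\SSS_{16}$, produce the intersection matrix $M_4$ and a $\bbZ$-basis of $\Pic(S_\kbar)\cong\bbZ^6$, and cut the $c_4=197$ conjugacy classes of subgroups down to these $14$, recording for each its orbit data on the $16$ lines and its rank $\varrho_G$; step~(VII) feeds the $14$ polytopes to {\tt polymake} and returns the $14$ numbers $\varrho_G\cdot(\text{polytope volume})$. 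One then reads off that, across the $14$ classes, this number depends only on $\varrho_G$ and on whether $G$ fixes a $(-1)$-curve, and that the set of pairs $(\rk\Pic(S),\alpha(S))$ that occurs is exactly the one claimed. As a check, the rank-$2$ blow-up value $\alpha=2/3$ is recovered by hand from the effective cone, which in a basis $\{H,\Sigma\}$ of $\Pic(S)$ ($\Sigma$ the sum of the five exceptional classes) is spanned by $\Sigma$ and the conic class $2H-\Sigma$; the dual polytope is then a triangle of area $1/3$, so $\alpha=2\cdot(1/3)=2/3$.

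It remains to attach geometry to the $14$ classes. For $S$ with $S(k)\neq\emptyset$ the following are equivalent: (a) $S$ has a $k$-rational $(-1)$-curve; (b) $S$ is isomorphic over $k$ to $\bP^2$ blown up in a Galois-stable set of five points; (c) $G$ stabilises one of the $16$ systems of five pairwise skew $(-1)$-curves on $S_\kbar$. For (a)$\Rightarrow$(c): a $k$-rational line $E$ meets exactly $N_{4,1}=5$ other lines, which are pairwise skew and — $W(\Dfive)$ being transitive on lines — form such a system, and it is Galois-stable. For (c)$\Rightarrow$(b): contracting that system gives a $k$-form of $\bP^2$ with a rational point, hence $\bP^2$ over $k$. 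For (b)$\Rightarrow$(a): the strict transform of the conic through the five points (or an exceptional curve over a rational point) is a $k$-rational line. When (b) holds, the partition of $5$ given by the $G$-orbit lengths on the five contracted curves — $k$-rational points matching orbits of size $1$ — is the one in the statement, and $\rk\Pic(S)=1+(\text{number of parts})$, since $\Pic(S_\kbar)^G$ is spanned by $H$ and the orbit-sums of the $E_i$. Running through the $14$ classes, those satisfying (c) realise precisely the partitions $1+1+1+1+1$ (trivial $G$, $\varrho_G=6$), $2+1+1+1$ ($G$ generated by one reflection, $\varrho_G=5$; all such are conjugate as $\Dfive$ is simply laced), $5$ ($\varrho_G=2$), $4+1$ and $3+2$ ($\varrho_G=3$), and $3+1+1$ and $2+2+1$ ($\varrho_G=4$); the remaining classes (one with $\varrho_G=1$, the rest with $\varrho_G\in\{2,3,4\}$) violate (c), so their $S$ carry no $k$-rational $(-1)$-curve. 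Matching this with the volumes from the first step gives $\alpha=2/3, 1/3, 1/9, 1/36, 1/180$ for the blow-up cases of ranks $2,\dots,6$ and $\alpha=1, 1, 1/2, 1/6$ for the ``no line'' cases of ranks $1, 2, 3, 4$. Bundling conjugate $\varrho$-maximal classes — thereby merging $4+1$ with $3+2$, $3+1+1$ with $2+2+1$, and any two ``no line'' classes of the same rank — is permitted by Theorem~\ref{inkl}, and the six items are mutually exclusive because (a) separates the two cases inside each rank.

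The one place that takes genuine care is this last matching: one must verify that the seven partitions of $5$ exhaust the classes satisfying (c), that the merged pairs $\{4+1,3+2\}$ and $\{3+1+1,2+2+1\}$ really are distinct conjugacy classes (they are — e.g. the fixed lattices $\langle H, E_1, E_2+E_3+E_4+E_5\rangle$ and $\langle H, E_1+E_2, E_3+E_4+E_5\rangle$ differ) that nonetheless give equal volumes, and that every non-(c) class genuinely violates (c). All of this is a finite computation done inside the same {\tt magma} session that produces the $\varrho$-maximal subgroups, once one also records the orbit-length multiset on the $16$ lines and tests stabilisation of a $5$-system. The {\tt polymake} volume computations are unproblematic for $d=4$: the largest polytope is the split one ($\varrho=6$), with only a few hundred vertices.
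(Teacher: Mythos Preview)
Your approach matches the paper's --- run Algorithm~\ref{poly} for $d=4$ and give the output a geometric reading --- though you compute on the $14$ $\varrho$-maximal classes while the paper actually runs over all $197$ conjugacy classes (skipping step~\ref{it:reduce}) and reads the pattern off the full list. Your equivalence (a)$\Leftrightarrow$(b)$\Leftrightarrow$(c) and the hand check $\alpha=2/3$ are correct and more explicit than anything the paper spells out.

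One slip to fix: the sentence ``bundling conjugate $\varrho$-maximal classes \dots\ is permitted by Theorem~\ref{inkl}'' is wrong. The $\varrho$-maximal subgroups attached to the partitions $4{+}1$ and $3{+}2$ are $\SSS_4$ and $\SSS_2\times\SSS_3$ (Table~\ref{tab:degfour}); these are neither conjugate in $W(\Dfive)$ nor contained in one another, so Theorem~\ref{inkl} gives no relation between their $\alpha$-values. The same applies to the three ``no line'' classes at rank~$2$ and the two at rank~$3$. That the values nonetheless coincide is something you read off the {\tt polymake} output --- and you already said so correctly earlier (``one then reads off that \dots\ this number depends only on $\varrho_G$ and on whether $G$ fixes a $(-1)$-curve''). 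Drop the misattribution to Theorem~\ref{inkl} and the argument stands.
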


\begin{proof}
  Our~implementation of Algorithm~\ref{poly}
    (skipping the reduction step~\ref{it:reduce} and working $U_d$ instead of
    $M_d$ afterwards) yields a list, associating to each number from 1 to~197
    a value of~alpha. The~result is obtained by giving a geometric
    interpretation to this~list.
\end{proof}

\noindent
  Step~\ref{it:reduce} of Algorithm~\ref{poly} gives $14$ conjugacy
  classes of $\varrho$-maximal subgroups (see \ref{maximal}) of
  $W(D_5)$. In step~\ref{it:alpha}, we discover that this leads to
  eight distinct values of $\alpha(S)$. Among~the $197$ conjugacy
  classes of subgroups and the $38$ orbit structures, they are
  distributed as shown in Table~\ref{tab:degfour}.

\begin{table}[ht]
\begin{tabular}{|l|c||c|c||c|c||c|}
  \hline
  Case & $\alpha$ & \!\!\#conj.\!\! & $\varrho$-maximal & \#orbit        & maximal                       & corr.\,case \\
  &          &   classes       &         & \!structures\! & \phantom{[1,1,1,1,2,2,2,2,4]} & cub.\,surf. \\\hline
  \hline\rule{0pt}{2.2ex}       
  I       & $1$             &            98 & $W(D_5)$                                 & \phantom{0}7 & [16]                    & II.i       \\[0.5mm]\hline\hline\rule{0pt}{2.2ex}
  II.i    & $1$             &            50 & $\SSS_4 \rtimes\, (\bbZ/2\bbZ)^3$         &           12 & [8,8]                   & III.i \\[0.5mm]\hline\rule{0pt}{2.2ex}
  &                 &  \phantom{0}7 & $\SSS_3 \times\, \SSS_2 \times \bbZ/2\bbZ$ & \phantom{0}1 & [2,2,6,6]               & III.iii \\[0.5mm]\hline\rule{0pt}{2.2ex}
  &                 &            11 & $\SSS_4 \times \SSS_2$                     & \phantom{0}2 & [4,4,8]                 & III.iv \\[0.5mm]\hline\rule{0pt}{2.2ex}
  II.ii   & $\frac23$       &  \phantom{0}5 & $\SSS_5$                                  & \phantom{0}2 & [1,5,10]                & III.v \\[0.5mm]\hline\hline\rule{0pt}{2.2ex}
III.i   & $\frac12$       &  \phantom{0}5 & $\SSS_4$                                  & \phantom{0}1 & [4,4,4,4]               & IV\!.i \\[0.5mm]\hline\rule{0pt}{2.2ex}
        &                 &  \phantom{0}5 & $\SSS_2 \times \SSS_2 \times \SSS_2$        & \phantom{0}3 & [2,2,2,2,4,4]           & IV\!.ii \\[0.5mm]\hline\rule{0pt}{2.2ex}
III.ii  & $\frac13$       &  \phantom{0}5 & $\SSS_4$                                  & \phantom{0}3 & [1,1,4,4,6]             & IV\!.i \\[0.5mm]\hline\rule{0pt}{2.2ex}
        &                 &  \phantom{0}3 & $\SSS_2 \times \SSS_3$                     & \phantom{0}1 & [1,1,2,3,3,6]           & IV\!.iii \\[0.5mm]\hline\hline\rule{0pt}{2.2ex}
IV\!.i  & $\frac16$       &  \phantom{0}2 & $\SSS_2 \times \SSS_2$                     & \phantom{0}1 & [2,2,\ldots,2]          & V\!.i \\[0.5mm]\hline\rule{0pt}{2.2ex}
IV\!.ii & $\frac19$       &  \phantom{0}2 & $\SSS_2 \times \SSS_2$                     & \phantom{0}2 & [1,1,1,1,2,2,2,2,4]     & V\!.i \\[0.5mm]\hline\rule{0pt}{2.2ex}
        &                 &  \phantom{0}2 & $\SSS_3$                                  & \phantom{0}1 & [1,1,1,1,3,3,3,3]       & V\!.ii \\[0.5mm]\hline\hline\rule{0pt}{2.2ex}
V       & $\frac1{36}$    &  \phantom{0}1 & $\SSS_2$                                  & \phantom{0}1 & [1,\ldots,1,2,2,2,2]    & VI \\[0.5mm]\hline\hline\rule{0pt}{2.2ex}
VI      & $\frac1{180}$   &  \phantom{0}1 & $0$                                      & \phantom{0}1 & [1,1,\ldots,1]          & VII \\[0.5mm]\hline
\hline
~~$\sum$&                 & 197\phantom{0} & \phantom{$\SSS_3 \times \SSS_3 \times \bbZ/2\bbZ$} & \phantom{000}38\phantom{000} & & \phantom{corr.\,case} \\\hline
\end{tabular}\vskip2mm
\caption{Degree four del Pezzo surfaces, the $14$ $\varrho$-maximal cases}\label{tab:degfour}\vskip-5mm
\end{table}

The rightmost column in this table indicates the type of cubic surface
(cf. Theorem~\ref{thm:cubic} and Table~\ref{Tab} below) that occurs
when blowing up one \mbox{$k$-rational}~point.  It~is a little
surprising that the types IV\!.i and~V\!.i appear~twice. The~reason
for this is as~follows.

A quartic del Pezzo surface of type III.i may be constructed by blowing up
$\bP^2$ in two \mbox{$k$-rational}~points and an orbit of size four, followed
by blowing down the line through the two \mbox{$k$-rational}~points.  This~is
a non-blown-up case, but \mbox{$k$-birationally} equivalent to a surface of
type~III.ii, which may be obtained by blowing up $\bP^2$ in a
\mbox{$k$-rational}~point and an orbit of size~four.
For~the cases IV\!.i and~IV\!.ii, the situation is very~similar.

\begin{theorem}[The values of $\alpha(S)$ for cubic surfaces]\label{thm:cubic}
  Let\/~$S$ be a smooth cubic surface over a field\/~$k$ such
  that\/~$S(k) \neq \emptyset$.  Then exactly one of following
  is~true.\smallskip

\begin{III}
\item $\rk \Pic(S) = 1$.  Then $\alpha(S) = 1$.\smallskip
\item $\rk \Pic(S) = 2$.  Then there are four~cases.
\begin{iii}
\item $S$~has a\/ \mbox{$k$-rational} line.  Then $\alpha(S) = 1$.
\item $S$~is isomorphic to\/ $\bP^2$, blown up in an orbit of
  size~six.  Then $\alpha(S) = \frac43$.
\item $S$~has a Galois-invariant double-six~\cite{EJ2}. Over~the
  quadratic extension\/~$l/k$, splitting the double-six, $S_l$ is
  isomorphic to\/ $\bP^2$, blown up in two orbits of size~three.
  Then $\alpha(S) = 2$.
\item $S$~has a Galois orbit consisting of two skew~lines.
  Then $\alpha(S) = \frac32$.
\end{iii}\smallskip
\item $\rk \Pic(S) = 3$.  Then there are five~cases.
\begin{iii}
\item $S$~has three coplanar\/ \mbox{$k$-rational}~lines.
  Then $\alpha(S) = \frac12$.
\item $S$~is isomorphic to\/ $\bP^2$, blown up in two orbits of
  size~three.  Then $\alpha(S) = 1$.
\item $S$~has a Galois-invariant double-six. Over~the quadratic
  extension\/~$l/k$, splitting the double-six, $S_l$ is isomorphic
  to\/ $\bP^2$, blown up in a\/ \mbox{$k$-rational} point, an orbit of
  size two, and an orbit of size~three.  Then, $\alpha(S) = 1$.
\item $S$~is isomorphic to\/ $\bP^2$, blown up in an orbit of size two
  and an orbit of size~four.  Then $\alpha(S) = \frac56$.
\item $S$~is isomorphic to\/ $\bP^2$, blown up in a\/
  \mbox{$k$-rational} point and an orbit of size~five.  Then,
  $\alpha(S) = \frac{17}{24}$.
\end{iii}\smallskip
\item $\rk \Pic(S) = 4$.  Then there are three~cases.
\begin{iii}
\item $S$~is isomorphic to\/ $\bP^2$, blown up in two\/
  \mbox{$k$-rational} points and an orbit of size~four.
  Then $\alpha(S) = \frac5{18}$.
\item $S$~is isomorphic to\/ $\bP^2$, blown up in three orbits of
  size~two.  Then $\alpha(S) = \frac7{18}$.
\item $S$~is isomorphic to\/ $\bP^2$, blown up in a\/
  \mbox{$k$-rational} point, an orbit of size two, and an orbit of
  size~three.  Then $\alpha(S) = \frac38$.
\end{iii}\smallskip
\item $\rk \Pic(S) = 5$.  Then there are two~cases.
\begin{iii}
\item $S$~is isomorphic to\/ $\bP^2$, blown up in two\/
  \mbox{$k$-rational} points and two orbits of size~two.
  Then $\alpha(S) = \frac18$.
\item $S$~is isomorphic to\/ $\bP^2$, blown up in three\/
  \mbox{$k$-rational} points and an orbit of size~three.
  Then $\alpha(S) = \frac5{48}$.
\end{iii}\smallskip
\item $\rk \Pic(S) = 6$.  Then $\alpha(S) = \frac1{30}$.\smallskip
\item $\rk \Pic(S) = 7$.  Then $\alpha(S) = \frac1{120}$.
\end{III}
\end{theorem}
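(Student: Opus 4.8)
The statement, like the one for quartic del Pezzo surfaces above, is essentially the output of Algorithm~\ref{poly} in the case $d=3$ together with a geometric reading of that output, so the plan is to run the algorithm and then translate. First I would realize $W(\Esix)$ as a subgroup of $\SSS_{27}$ acting on the $27$ lines, reconstruct the intersection form on $\Pic(S_\kbar)\cong\bbZ^{27}/\ker M_3$ (of rank $7$) via Remark~\ref{rem:matrix}, and fix a basis coming from six pairwise skew lines and one further line, as in steps~I--III of Algorithm~\ref{poly}. This expresses every line as an integer vector and reduces, for any subgroup $W(S)\subseteq W(\Esix)$, the number $\alpha(S)$ to $\varrho(S)$ times the volume of the explicit rational polytope in $\bbR^{\varrho(S)}$ cut out by $\langle x,w_i\rangle\ge 0$, for the orbit-sum vectors $w_i$, and $\langle x,\sum_i w_i\rangle\le\frac{27}{3}=9$; this volume is what \texttt{polymake} computes in step~\ref{it:alpha}.

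Next I would enumerate with \texttt{magma} the $c_3=350$ conjugacy classes of subgroups of $W(\Esix)$, compute for each the orbit partition of the $27$ lines and the rank $\varrho_G$ of the invariant sublattice (step~\ref{it:ranks}), and either compute $\alpha$ for all $350$ classes directly or carry out the reduction of step~\ref{it:reduce}, which by Theorem~\ref{inkl} and~\ref{maximal} leaves the $c_3''=17$ conjugacy classes of $\varrho$-maximal subgroups. Either way one obtains the seventeen essentially different data $(\varrho,\text{ polytope})$ guaranteed by Theorem~\ref{inkl}, hence $17$ values of $\alpha$; sorted by Picard rank these reproduce exactly the values listed in cases~I--VII of the statement.

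It remains --- and this is the real content of the argument --- to attach to each of the $17$ $\varrho$-maximal classes the stated geometric description and to check that the descriptions are exhaustive and mutually exclusive. For the classes in which $S$ is, over $k$, the blow-up of $\bP^2$ in a Galois-stable set of six points, one has explicit generators of $\Pic(S)$ by \cite[Theorem~V.4.9]{Ha}; here I would match the prescribed orbit partition of the six blown-up points against the orbit partition of the $27$ lines output by \texttt{magma}, using the classical description of the $27$ lines as the six exceptional curves $E_i$, the strict transforms $L_{ij}$ of the fifteen lines through pairs of points, and the six conics $Q_i$, together with the induced $W(\Esix)$-action. For the remaining classes --- the surfaces with only a $k$-rational line, with a Galois orbit of two skew lines, with three coplanar $k$-rational lines, and the two with a Galois-invariant double-six \cite{EJ2} --- no such model over $k$ exists; I would pin down $S$ instead by its configuration of $k$-rational lines, respectively by the double-six, which over the splitting quadratic extension $l/k$ becomes a pair of skew triples of exceptional curves. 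Since $\alpha$ depends only on the conjugacy class of $W(S)$ in $W(\Esix)$, it is already determined by this combinatorial data and in particular may be read off the model over $l$.

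The main obstacle is therefore not the volume computation but this geometric bookkeeping: one must verify that every $\varrho$-maximal conjugacy class is realized by an actual cubic surface with the claimed $W(S)$, that the non-blow-up cases genuinely admit no Galois-stable sextuple of pairwise skew lines (so that they are not blow-ups of $\bP^2$ over $k$), and that cases sharing both a Picard rank and a value of $\alpha$ --- for instance III.ii and III.iii, both with $\alpha=1$, or the further $\alpha=1$ cases in ranks $1$ and $2$ --- are nonetheless distinguished by their orbit structures on the $27$ lines. Once this dictionary between the seventeen \texttt{magma}/\texttt{polymake} outputs and the geometric cases is in place, the theorem follows.
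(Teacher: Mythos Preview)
Your proposal is correct and follows essentially the same approach as the paper: run Algorithm~\ref{poly} for $d=3$ (either on all $350$ conjugacy classes or on the $17$ $\varrho$-maximal ones via Theorem~\ref{inkl}), compute the polytope volumes with \texttt{polymake}, and then attach the geometric descriptions to the resulting list. The paper's own proof is in fact just the one-sentence reference to Algorithm~\ref{poly} given for the quartic case, together with Table~\ref{Tab}; your elaboration of the geometric bookkeeping step---matching orbit structures to blow-up models, handling the non-blown-up cases via rational lines or the double-six, and checking that cases with coinciding $(\varrho,\alpha)$ are still separated by their orbit data---is more explicit than what the paper writes down but is exactly the work hidden behind the phrase ``giving a geometric interpretation to this list.''
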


\noindent
According~to Theorem~\ref{thm:cubic}, there are $17$ conjugacy classes of
$\varrho$-maximal subgroups, leading to 14 distinct values of~$\alpha(S)$. The
$350$ conjugacy classes of subgroups and the 91 orbit structures are
distributed among them as shown in Table~\ref{Tab}. In~case III.i,
$(\bbZ/2\bbZ)^3$ means the sum zero subspace in~$(\bbZ/2\bbZ)^4$, acted upon
by~$\SSS_4$ in the obvious~manner.

\begin{table}[ht]
\begin{tabular}{|l|c||c|c||c|c|}
  \hline
  Case & $\alpha$ & \!\!\#conjugacy\!\! & $\varrho$-maximal & \#orbit            & maximal \\
  &          &   classes           &         & \!\!structures\!\! &         \\\hline
  \hline\rule{0pt}{2.2ex}       
  I       & $1$             &           137 & $W(E_6)$                                 &           22 & [27]                           \\[0.5mm]\hline\hline\rule{0pt}{2.2ex}
  II.i    & $1$             & \phantom{0}98 & $W(D_5)$                                 &           22 & [1,10,16]                      \\[0.5mm]\hline\rule{0pt}{2.2ex}
  II.ii   & $\frac43$       & \phantom{0}16 & $\SSS_6$                                  & \phantom{0}5 & [6,6,15]                       \\[0.5mm]\hline\rule{0pt}{2.2ex}
II.iii  & $2$             & \phantom{0}11 & $\SSS_3 \times \SSS_3 \times\, \bbZ/2\bbZ$ & \phantom{0}3 & [3,3,6,6,9]                    \\[0.5mm]\hline\rule{0pt}{2.2ex}
II.iv   & $\frac32$       & \phantom{00}8 & $\SSS_5 \times\, \bbZ/2\bbZ$              & \phantom{0}2 & [2,5,10,10]                    \\[0.5mm]\hline\hline\rule{0pt}{2.2ex}
III.i   & $\frac12$       & \phantom{0}33 & $\SSS_4 \rtimes\, (\bbZ/2\bbZ)^3$         &           13 & [1,1,1,8,8,8]                  \\[0.5mm]\hline\rule{0pt}{2.2ex}
III.ii  & $1$             & \phantom{00}6 & $\SSS_3 \times \SSS_3$                     & \phantom{0}3 & [3,3,3,3,3,3,9]                \\[0.5mm]\hline\rule{0pt}{2.2ex}
III.iii & $1$             & \phantom{00}7 & $\SSS_3 \times \SSS_2 \times\, \bbZ/2\bbZ$ & \phantom{0}2 & \phantom{000}[1,2,2,3,3,4,6,6]\phantom{000} \\[0.5mm]\hline\rule{0pt}{2.2ex}
III.iv  & $\frac56$       & \phantom{0}11 & $\SSS_4 \times \SSS_2$                     & \phantom{0}5 & [1,2,2,4,4,6,8]                \\[0.5mm]\hline\rule{0pt}{2.2ex}
III.v   & $\frac{17}{24}$ & \phantom{00}5 & $\SSS_5$                                  & \phantom{0}2 & [1,1,5,5,5,10]                 \\[0.5mm]\hline\hline\rule{0pt}{2.2ex}
IV\!.i  & $\frac5{18}$    & \phantom{00}5 & $\SSS_4$                                  & \phantom{0}3 & [1,1,1,1,1,4,4,4,4,6]          \\[0.5mm]\hline\rule{0pt}{2.2ex}
IV\!.ii & $\frac7{18}$    & \phantom{00}4 & $\SSS_2 \times \SSS_2 \times \SSS_2$        & \phantom{0}3 & [1,1,1,2,2,2,2,2,2,4,4,4]      \\[0.5mm]\hline\rule{0pt}{2.2ex}
IV\!.iii& $\frac38$       & \phantom{00}3 & $\SSS_3 \times \SSS_2$                     & \phantom{0}1 & [1,1,1,2,2,2,3,3,3,3,6]        \\[0.5mm]\hline\hline\rule{0pt}{2.2ex}
V\!.i   & $\frac18$       & \phantom{00}2 & $\SSS_2 \times \SSS_2$                     & \phantom{0}2 & [1,1,1,1,1,1,1,2,2,\ldots,2,4] \\[0.5mm]\hline\rule{0pt}{2.2ex}
V\!.ii  & $\frac5{48}$    & \phantom{00}2 & $\SSS_3$                                  & \phantom{0}1 & [1,1,\ldots,1,3,3,3,3,3,3]     \\[0.5mm]\hline\hline\rule{0pt}{2.2ex}
VI      & $\frac1{30}$    & \phantom{00}1 & $\SSS_2$                                  & \phantom{0}1 & [1,1,\ldots,1,2,2,2,2,2,2]     \\[0.5mm]\hline\hline\rule{0pt}{2.2ex}
VII     & $\frac1{120}$   & \phantom{00}1 & $0$                                      & \phantom{0}1 & [1,1,\ldots,1]                 \\[0.5mm]\hline
\hline
~~$\sum$&                 & \phantom{000}350\phantom{000} & \phantom{$\SSS_3 \times \SSS_3 \times \bbZ/2\bbZ$} & \phantom{000}91\phantom{000} & \\\hline
\end{tabular}\vskip2mm
\caption{Smooth cubic surfaces, the $17$ $\varrho$-maximal cases}\label{Tab}\vskip-5mm
\end{table}

\begin{remarks}
\begin{abc}
\item Some~of the cases allow equivalent~characterizations.
  For~example, IV\!.i contains the cubic surfaces of Picard rank~$4$
  having five rational lines that form two triangles with a line
  in~common.
\item Observe~that $\rk\Pic(S) \geq 4$ implies that $S$ has a
  Galois-invariant~sixer, i.e.,~$S$ is then isomorphic over~$k$ to the
  blow-up of~$\bP^2$ in some Galois-invariant set of size~six.
\item Recall~that {\tt gap} produces a list, giving the $350$ conjugacy
  classes of subgroups of~$W(E_6)$ in a definite~numbering.  Associated to
  each number, we have a value $\alpha(S)$.  Unfortunately, this list is far
  too long to be reproduced here. But let us restrict considerations to
  $\rk\Pic(S)\geq 2$ and one group per orbit structure, the maximal
  one. Then~the situation may be visualized by the inclusion graphs in
  Figure~\ref{fig:inclusion}.

\begin{figure}[ht]
\centerline{
\includegraphics[width=10.8cm]{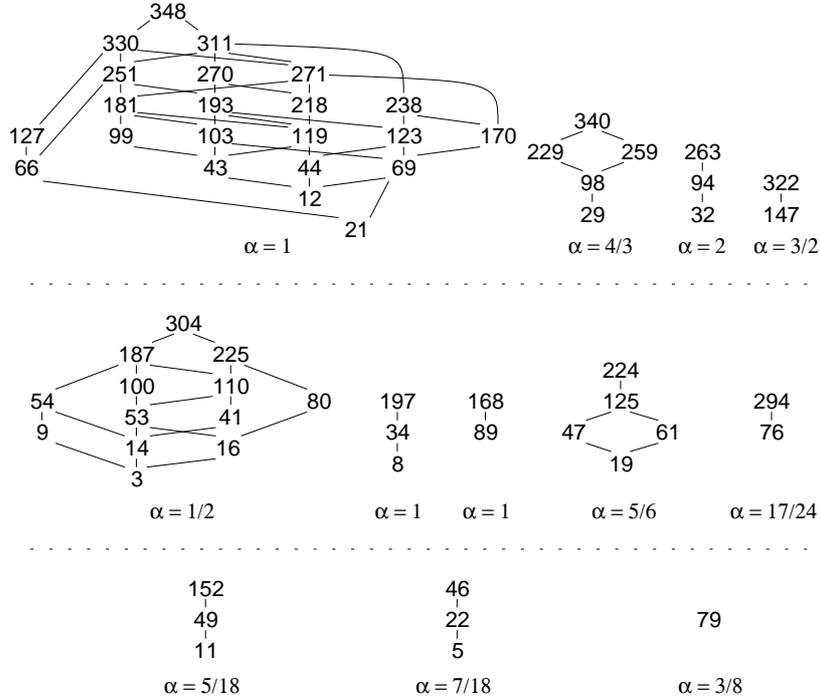}
}
\caption{Conjugacy classes of subgroups with Picard ranks $2$, $3$, and~$4$, only one per orbit structure, numbered as in {\tt gap} 4.4.12.}\label{fig:inclusion}
\end{figure}

The~four conjugacy classes with Picard rank~$5$ are $4$ and~$10$, leading
to~$\alpha = \frac18$, as~well as $7$ and~$24$, leading to~$\alpha =
\frac5{48}$.  Here,~$7$ and~$24$ have the same
orbit~structure. Finally,~number~$2$ is the only with Picard rank~$6$ and
number~$1$ the only with Picard rank~$7$.
\end{abc}
\end{remarks}

\begin{example}
\label{S5}
Let us take a closer look at case~III.v.  Here,~$S$ is obtained by
blowing up $\bP^2$ in a rational point and an orbit of
size~five. The~Galois group must permute the five
points~transitively. Therefore,~the maximal possible Galois group is
isomorphic to~$\SSS_5$.  It~has {\tt gap} number~$294$.

Further,~$\SSS_5$ has exactly five conjugacy classes of transitive
subgroups. This~explains the number $5$ in the third column
of~Table~\ref{Tab}. Among~the transitive subgroups, there are the
cyclic group of order~five and the dihedral group of order~$10$.
For~these, from the explicit description of the $27$~lines
\cite[Theorem~V.4.9]{Ha}, one easily deduces that the finer orbit
structure [1,1,5,5,5,5,5]~occurs. In~fact, the dihedral group is the
maximal subgroup corresponding to this orbit~structure. Its~{\tt gap}
number is~$76$.
\end{example}

\begin{ttt}
  For~$d=2$ and $d=1$,~Theorem~\ref{inkl} reduces the problem to $41$
  resp. $32$ conjugacy classes of $\varrho$-maximal subgroups. It~is
  therefore possible to specify all the values of $\alpha(S)$ in
  Table~\ref{tab:degtwoone}. The~subscripts $2$ shall indicate that
  the corresponding rational number arises in two distinct~cases.

\begin{table}[ht]
\centerline{
\begin{tabular}{|c|l|}
\hline
Picard & \\
rank   & $\alpha$ \\\hline\hline\rule{0pt}{2.2ex}
$1$ & $1$ \\[0.5mm]\hline\rule{0pt}{2.2ex}
$2$ & $1, 2_2, \frac7{3}, 3_2, 4$ \\[0.5mm]\hline\rule{0pt}{2.2ex}
$3$ & $1, \frac5{3}, \frac{11}6, 2, \frac9{4}, \frac5{2}, \frac8{3}, 3_2$ \\[0.5mm]\hline\rule{0pt}{2.2ex}
$4$ & $\frac2{3}, \frac{11}{12}, \frac{11}9, \frac{13}9, \frac{19}{12}, \frac5{3}, 2$ \\[0.5mm]\hline\rule{0pt}{2.2ex}
$5$ & $\frac{17}{36}, \frac2{3}, \frac{13}{18}, 1$ \\[0.5mm]\hline\rule{0pt}{2.2ex}
$6$ & $\frac7{30}, \frac{13}{45}$ \\[0.5mm]\hline\rule{0pt}{2.2ex}
$7$ & $\frac1{10}$ \\[0.5mm]\hline\rule{0pt}{2.2ex}
$8$ & $\frac1{30}$ \\[0.5mm]\hline
\end{tabular}
\qquad
\raisebox{-0.31cm}{
\begin{tabular}{|c|l|}
\hline
Picard & \\
rank   & $\alpha$ \\\hline\hline\rule{0pt}{2.2ex}
$1$ & $1$ \\[0.5mm]\hline\rule{0pt}{2.2ex}
$2$ & $2, 4_2, \frac{16}3, 6, 7, 8, 10$ \\[0.5mm]\hline\rule{0pt}{2.2ex}
$3$ & $3, 4, 6, \frac{77}{12}, \frac{23}3, \frac{26}3, 9, \frac{32}3, 11_2, 14$ \\[0.5mm]\hline\rule{0pt}{2.2ex}
$4$ & $4, \frac{35}6, \frac{20}3, \frac{31}4, \frac{85}9, \frac{92}9, \frac{31}3, 13$ \\[0.5mm]\hline\rule{0pt}{2.2ex}
$5$ & $4, \frac{355}{72}, \frac{41}6, \frac{31}4, \frac{103}{12}, \frac{92}9$ \\[0.5mm]\hline\rule{0pt}{2.2ex}
$6$ & $\frac{178}{45}, \frac{16}3, \frac{94}{15}$ \\[0.5mm]\hline\rule{0pt}{2.2ex}
$7$ & $\frac{59}{20}, \frac{18}5$ \\[0.5mm]\hline\rule{0pt}{2.2ex}
$8$ & $\frac{29}{15}$ \\[0.5mm]\hline\rule{0pt}{2.2ex}
$9$ & $1$ \\[0.5mm]\hline
\end{tabular}
}
}\vskip2mm
\caption{Del Pezzo surfaces of degrees $2$ and~$1$, the $32$ resp.\ $41$ $\varrho$-maximal cases}\label{tab:degtwoone}
\end{table}\vskip-5mm
\end{ttt}

\begin{remarks}\label{rem:orbits}
\begin{abc}
\item When verifying Manin's conjecture
  \begin{equation*}
    N_{U,H}(B) \sim c_{S,H} B(\log B)^{\varrho(S)-1}
  \end{equation*}
  for a del Pezzo surface $S$ of degree $d$ over a number field $k$,
  the rank $\varrho(S)$ of $\Pic(S)$ must be determined, and this is
  usually done via the action of $\Gal(\kbar/k)$ on the set of
  $(-1)$-curves. With this information, the factor $\alpha(S)$ of
  $c_{S,H}$ can be read off our tables as follows.

  If $S$ is smooth of degree $d \ge 4$ over $k$, we note that the value of
  $\alpha(S)$ is uniquely determined by $d$, $\varrho(S)$ and the question
  whether $S$ contains at least one line defined over~$k$.

  For $d=3$, the situation is slightly more complicated. With one
  exception, $\alpha(S)$ can be read off once one has determined
  $\varrho(S)$, the number of lines defined over $k$ and the numerical
  orbit structure (i.e., the number of elements in each
  $\Gal(\kbar/k)$-orbit on the set of $(-1)$-curves). Indeed, an
  analysis of our data shows that $\alpha(S)$ has the same value as
  the unique one of the $17$ $\varrho$-maximal cases in
  Table~\ref{Tab} with the same $\varrho(S)$, the same number of
  $(-1)$-curves defined over $k$, and an orbit structure that can be
  split into the given orbit structure (cf.\ Theorem~\ref{inkl}), with
  the following exception.

  Numerically, both orbit structures II.ii $[6,6,15]$ and
  II.iii $[3,3,6,6,9]$ can be split into $[3,3,3,6,6,6]$. These can be
  distinguished as follows: We are in the first case if and only if one of our
  three orbits of size $6$ consists of pairwise skew lines.

  For $d=2$ and $d=1$, it~seems impossible to give a similar strategy because
  there are $1071$ resp.~$13975$ different orbit structures on the
  $(-1)$-curves.  Furthermore,~given a concrete del Pezzo surface of degree
  $1$ or~$2$, it might be a delicate problem to determine the corresponding
  $\varrho$-maximal subgroup as in~\ref{maximal}.

\item   Several of the values have been known~before. For
  $\rk\Pic(S)=1$, the value $\alpha(S)=1$ is almost immediate from
  E.\,Peyre's~definition. For split del Pezzo surfaces, our
  calculations confirm the values of from~\cite[Theorem~4]{De}.

  In degree $4$, case V confirms the value in \cite[Section~10]{dlBB}.
  In degree $3$, II.i and II.iv are shown
  in~\cite[Remarks~VI.5.9]{J}. II.ii is~\cite[Example~5.6]{J}. Further,~particular
  cases of II.iii, III.iii, and~IV\!.ii appear
  in~\cite[Proposition~5.1]{PT}.
\end{abc}
\end{remarks}

\bibliographystyle{amsplain}

\end{document}